\numberwithin{equation}{section}
\tikzstyle{dot}=[shape=circle,draw,color=black,fill=black,inner sep=1pt]
\theoremstyle{plain}
\newtheorem{theorem}{Theorem}[section]
\newtheorem{lemma}[theorem]{Lemma}
\newtheorem{corollary}[theorem]{Corollary}
\newtheorem*{corollary*}{Corollary}
\newtheorem{proposition}[theorem]{Proposition}
\theoremstyle{definition}
\newtheorem*{definition*}{Definition}
\theoremstyle{remark}
\newtheorem{remark}[theorem]{Remark}
\newcommand{\C}{\mathbf{C}}
\newcommand{\Z}{\mathbf{Z}}
\newcommand{\N}{\mathbf{N}}
\newcommand{\cM}{\mathcal{M}}
\newcommand{\cN}{\mathcal{N}}
\newcommand{\cH}{\mathcal{H}}
\DeclareMathOperator{\tr}{tr}
\newcommand{\IIun}{$\mathrm{II}_1$}
\DeclareMathOperator{\Tr}{Tr}
\begin{document}

\title[Orthogonalization of POVMs]{Orthogonalization of Positive Operator Valued Measures}
\date{\today}

\keywords{}

\author[M.~De~La~Salle]{Mikael De La Salle}
\address{Université de Lyon, Université Claude Bernard Lyon 1, CNRS, France}
\thanks{MdlS was funded by the ANR grants AGIRA ANR-16-CE40-0022 and Noncommutative analysis on groups and quantum groups ANR-19-CE40-0002-01}

\email{delasalle@math.univ-lyon1.fr}
\begin{abstract}
We show that a partition of the unity (or POVM) on a Hilbert space that is almost orthogonal is close to an orthogonal POVM in the same von Neumann algebra. This generalizes to infinite dimension previous results in matrix algebras by Kempe-Vidick and Ji-Natarajan-Vidick-Wright-Yuen. Quantitatively, our result are also finer, as we obtain a linear dependance, which is optimal.

We also generalize to infinite dimension a duality result between POVMs and minimal majorants of finite subsets in the predual of a von Neumann algebra. 
\end{abstract}
\maketitle
\section{Orthonormalization of partitions of unity in infinite dimensional Hilbert space}

Stability is a term coined in \cite{Ulam} to describe a situation when mathematical objects that almost satisfy certain properties are close to objects exactly satisfying these properties. It has been recently much studied for groups, mainly motivated by the question of whether every group is hyperlinear/sofic. The same phenomena but for quantum strategies for two-player non-local games are also central in the recent work by Ji-Natarajan-Vidick-Wright-Yuen \cite{quantumsoundness,MIPRE}. The goal of this note is to explore a small portion of \cite{quantumsoundness} and discuss its possible generalizations to infinite dimension and some consequence in terms of stability.

The objects that we study in this note are finite families $(t_1,\dots,t_n)$ of positive operators on a complex Hilbert space $\cH$ which sum to the identity: $t_1+\dots+t_n = 1_{\cH}$. They are called \emph{partitions of unity} by operator algebraists, and \emph{Positive Operator Valued Measures} (POVM) by quantum information theorists. We will use POVM here as it is shorter, and the integer $n$ is called the number of outputs. And we talk about Projection Valued Measures (PVM) if in addition all the $t_i$'s are projections ($t_i^2=t_i$). The main result of this note is the following. The question whether a form of this result holds in infinite dimension answers a question asked by Henry Yuen (private communication). This result is used in the subsequent work \cite{quantumsoundness2}.

\begin{theorem}\label{thm:orthonormalization_Hilbert} Let $(a_1,\dots,a_n)$ be a POVM on a complex Hilbert space $\cH$, let $\xi \in \cH$ be a unit vector and $\varepsilon \in [0,1]$ satisfying $ \sum_i \|a_i \xi\|^2 > 1-\varepsilon$.

There exists an orthogonal decomposition $\cH = \cH_1\oplus \cH_2 \oplus \dots\oplus \cH_n$ such that
\begin{enumerate}
\item\label{item:Ai_close_to_Pi} if $\xi_i$ denotes the orthogonal projection on $\cH_i$, then $\sum_i \|a_i\xi - \xi_i\|^2 < 9 \varepsilon$,
\item\label{item:commutant} every operator $b \in B(\cH)$ which commutes with each $a_i$ preserves each $\cH_i$.
\end{enumerate}
\end{theorem}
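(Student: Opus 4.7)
The plan is to encode the POVM as a single operator, polar-decompose it to obtain an isometry close to the data on $\xi$, and then correct that isometry into one of ``PVM shape'' inside the von Neumann algebra $\cM:=\{a_1,\ldots,a_n\}''$.

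Set $T\colon\cH\to\cH\otimes\C^n$ by $T\xi=\sum_i a_i\xi\otimes e_i$; its adjoint satisfies $T^*(\eta\otimes e_i)=a_i\eta$, so $T^*T=\sum_i a_i^2$. Since $0\le a_i\le 1$ forces $a_i^2\le a_i$, we have $T^*T\le\sum_i a_i=1$, so $\|T\|\le 1$, and the hypothesis reads $\|T\xi\|^2>1-\varepsilon$. Moreover $\ker T=\{0\}$: $a_i\xi=0$ for all $i$ would give $\xi=\sum_i a_i\xi=0$. Polar-decomposing $T=V|T|$ with $|T|=(\sum_i a_i^2)^{1/2}$, the range of $|T|$ is dense and $V\colon\cH\to\cH\otimes\C^n$ is a genuine isometry. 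Writing $V=\sum_i v_i\otimes e_i$, one has $v_i=a_i|T|^{-1}$ on the dense range of $|T|$ (extended by continuity), $\sum_i v_i^*v_i=V^*V=1$, and each $v_i\in\cM$ by Borel functional calculus.

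The first part of the error is direct: $\|V\xi-T\xi\|^2=\|V(1-|T|)\xi\|^2=\langle(1-|T|)^2\xi,\xi\rangle$, and since $|T|\le 1$ gives $|T|\ge|T|^2$ one has $\langle|T|\xi,\xi\rangle\ge\langle|T|^2\xi,\xi\rangle>1-\varepsilon$. Expanding $(1-|T|)^2=1-2|T|+|T|^2$ and using $\langle|T|^2\xi,\xi\rangle\le 1$ yields $\|V\xi-T\xi\|^2<2\varepsilon$. Now I note that an orthogonal decomposition $\cH=\bigoplus_i\cH_i$ with projections $p_i\in\cM$ is exactly the data of a \emph{PVM isometry} $V'=\sum_i p_i\otimes e_i\colon\cH\to\cH\otimes\C^n$: for any such $V'$, $\sum_i\|a_i\xi-p_i\xi\|^2=\|T\xi-V'\xi\|^2$, so the inequality in \ref{item:Ai_close_to_Pi} becomes $\|T\xi-V'\xi\|^2<9\varepsilon$, while \ref{item:commutant} is automatic since every $b\in\cM'$ commutes with each $p_i\in\cM$ and so preserves $\cH_i=p_i\cH$. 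It therefore suffices to produce a PVM $(p_i)\subset\cM$ with $\|V\xi-V'\xi\|^2$ controlled linearly in $\varepsilon$ and to combine with $\|V\xi-T\xi\|^2<2\varepsilon$ via the Hilbert-space triangle inequality.

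The main obstacle is this last step. The natural candidate POVM $(v_i^*v_i)=(|T|^{-1}a_i^2|T|^{-1})\subset\cM$ already sums to $1$, but the $v_i^*v_i$ need not mutually commute, so the naive rounding $\chi_{[1/2,1]}(v_i^*v_i)$ does not give projections summing to $1$. Equivalently, the range projection $VV^*\in B(\cH\otimes\C^n)$ need not commute with the diagonal PVM $M_i=1\otimes e_ie_i^*$; however, since $T\xi$ is diagonal and $V\xi$ lies within $\sqrt{2\varepsilon}$ of it, the off-diagonal defect $VV^*-\sum_i M_i VV^*M_i$ evaluated on $V\xi$ is controlled by $\varepsilon$. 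The heart of the proof must convert this quantitative near-commutation into an actual $M_i$-invariant subspace of $\cH\otimes\C^n$ close to $V(\cH)$, equivalently into a PVM in $\cM$ with $O(\varepsilon)$ error. I would try either an iterative peeling inside $M_n(\cM)$ (reading off $p_1$ from a suitable spectral projection of $v_1^*v_1$, then $p_2$ from the compression of $v_2^*v_2$ to $1-p_1$, and so on, while tracking the error budget) or a perturbative argument rotating $VV^*$ to a nearby block-diagonal projection via a small unitary correction inside $\cM\otimes M_n$. The delicate point is to extract the claimed \emph{linear} dependence on $\varepsilon$ and the explicit constant $9$, which is presumably where the main technical work of the paper goes.
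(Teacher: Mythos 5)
Your setup is correct and closely parallels the paper's: you encode the POVM as a column operator in $M_n(\cM)$, polar-decompose it, and reduce the problem to replacing the resulting isometry $V$ by a nearby ``PVM isometry'' $V'=\sum_i p_i\otimes e_i$. The estimate $\|V\xi-T\xi\|^2<2\varepsilon$ is right, and your observation that conclusion~\eqref{item:commutant} is automatic once the $p_i$ lie in $\cM=\{a_1,\dots,a_n\}''$ is also right. The difference is cosmetic (the paper works with $a_i^{1/2}$ rather than $a_i$, and inserts auxiliary projections $q_i$, see below), but the framework is essentially the same.

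However, you explicitly leave the central step — producing a PVM $(p_i)\subset\cM$ with $\|V\xi-V'\xi\|^2=O(\varepsilon)$ — unresolved, and the two routes you sketch (iterative peeling of spectral projections, or a small unitary rotation of $VV^*$) do not obviously close the gap, especially in infinite dimension. Your candidate $v_i^*v_i$ need not commute and, more to the point, there is no reason the range projection $VV^*\in M_n(\cM)$ is even Murray--von Neumann equivalent to a block-diagonal projection $\sum_i e_{i,i}\otimes q_i$ with $\sum_i q_i=1$; without that equivalence no ``rotation'' of the kind you want exists. This is exactly where the structure theory of $\cM$ enters in the paper and where the real work lies, and it is what your proposal is missing. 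Concretely, the paper (a) splits $\cM$ into semifinite and type III summands; (b) in the finite case, uses a Krein--Milman extreme-point argument in the weak-$*$ compact convex set $\{(x_i):0\le x_i\le 1,\ [x_i,a_i]=0,\ \sum_i E(x_i)=1\}$ (where $E$ is the center-valued trace) to produce \emph{projections} $q_i$ commuting with $a_i$ with $\sum_i E(q_i)=1$ and $\varphi(\sum_i q_ia_i)\ge 1-\varepsilon$ (the theorem's Lemma~\ref{lem:selection_of_projections}); (c) forms $x=\sum_i e_{i,1}\otimes q_i a_i^{1/2}$ so that left and right supports of $x$ sit under projections of equal center-valued trace, and uses finiteness (comparison of projections) to extend the polar partial isometry to $u$ with $u^*u=e_{1,1}\otimes 1$, $uu^*=\sum_i e_{i,i}\otimes q_i$ — this is the device that makes the PVM $p_i=u^*(e_{i,i}\otimes q_i)u$ exist and sum to $1$; and (d) handles type III by a different argument using that projections there satisfy $(1-q)\sim 1$. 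None of this is present in your proposal; the ``extract a nearby block-diagonal projection'' step is not a perturbative or combinatorial exercise but is carried by the center-valued trace / comparison theory, and finding it would constitute most of the proof. Finally, note the arithmetic constraint: since you already spend $\sqrt{2\varepsilon}$ getting from $T\xi$ to $V\xi$, the triangle inequality forces you to achieve $\|V\xi-V'\xi\|\le(3-\sqrt 2)\sqrt\varepsilon$, i.e.\ $\|V\xi-V'\xi\|^2\lesssim 2.5\,\varepsilon$, which is a tighter budget than a generic ``$O(\varepsilon)$'' and would need care even if the construction were supplied.
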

In other words, the condition $\sum_i \|a_i \xi\|^2 > 1-\varepsilon$ implies that: \eqref{item:Ai_close_to_Pi}  $(a_1,\dots,a_n)$ is $9\varepsilon$-close on $\xi$ to a PVM $(p_1,\dots,p_n)$ \eqref{item:commutant} which preserves the symmetries of the original POVM.

This theorem generalizes to infinite dimension and slightly strengthens a result from \cite{quantumsoundness} (see also \cite[Lemma 19]{zbMATH06301159}), which proves a form of this theorem with finite dimensional Hilbert spaces. More precisely, we can rephrase \cite[Theorem 5.2]{quantumsoundness} as follows: if $\cH = \cH_A \otimes \cH_B$ is a tensor product of \emph{finite dimensional} Hilbert spaces and the $a_i$ are of the form $A_i \otimes \mathrm{id}_{B}$, then the theorem holds, but with $9\varepsilon$ in \eqref{item:Ai_close_to_Pi} (which is essentially optimal, see Remark~\ref{rem:9epsilon_optimal}) replaced by $100 \varepsilon^{\frac 1 4}$. Formally, the conclusion \eqref{item:commutant} is replaced by a slightly weaker conclusion, as it only requires that the orthogonal projections on each factor in the orthogonal decomposition are of the same tensor product form $P_i \otimes \mathrm{id}_{B}$, or equivalently that the decomposition is of the form 
\[ \cH = (\cH_1\otimes \cH_B) \oplus \dots \oplus (\cH_n \otimes \cH_B).\]
But the main contribution here is to deal with infinite dimensional Hilbert spaces.

If one only requires the conclusion \eqref{item:Ai_close_to_Pi}, Theorem~\ref{thm:orthonormalization_Hilbert} becomes a small exercise in euclidean geometry, and the dimension of $\cH$ is irrelevant as everything happens in the space spanned by the $n$ vectors $a_i \xi$. It is the conclusion \eqref{item:commutant} that makes the statement dependant on the dimension of $\cH$ and of the structure of the algebra of operators commuting with $a_i$. So, although it is stated in Hilbert-space vocabulary,  Theorem~\ref{thm:orthonormalization_Hilbert} is a result about von Neumann algebras and states. This paper might be read by non-experts in von Neumann algebras, so we will try to recall basic definitions in section~\ref{sec:reminders_on_vnA}, and to give complete proofs or precise references for the statements we need. We refer to standard textbooks such as \cite{Takesaki1} for more background.


The following is an equivalent reformulation of Theorem~\ref{thm:orthonormalization_Hilbert} in von Neumann algebraic language.
\begin{theorem}\label{thm:orthonormalization} Let $\cM$ be a von Neumann algebra with a normal state $\varphi$, and let $(a_i)$ be a POVM in $\cM$ such that $\varphi(\sum_i a_i^2) > 1-\varepsilon$. 

Then there is a PVM $(p_i) \subset \cM$ (made of projections) such that $\varphi(\sum_i |a_i - p_i|^2) < 9 \varepsilon$.
\end{theorem}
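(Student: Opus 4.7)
The plan is to first translate Theorem~\ref{thm:orthonormalization} into Theorem~\ref{thm:orthonormalization_Hilbert} via the GNS construction applied to $(\cM,\varphi)$: letting $\xi=\widehat 1$ be the canonical cyclic vector, the hypothesis $\varphi(\sum_i a_i^2) > 1-\varepsilon$ becomes $\sum_i \|a_i\xi\|^2 > 1-\varepsilon$, and condition~\eqref{item:commutant} of Theorem~\ref{thm:orthonormalization_Hilbert} (invariance of the decomposition under the commutant $\pi(\cM)'$) is exactly what is needed to ensure that the corresponding orthogonal projections lie in $\pi(\cM)''\cong\cM$ and hence form a PVM in $\cM$.

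For the construction I would first try Borel functional calculus. Setting $p_i=\chi_{[1/2,1]}(a_i)\in\cM$ and using the pointwise inequality $\bigl(t-\chi_{[1/2,1]}(t)\bigr)^2\le t(1-t)$ on $[0,1]$ yields
\[
\sum_i\varphi\bigl((a_i-p_i)^2\bigr)\;\le\;\sum_i\varphi\bigl(a_i(1-a_i)\bigr)\;=\;1-\varphi\Bigl(\sum_i a_i^2\Bigr)\;<\;\varepsilon,
\]
which would finish the proof with constant $1$ (rather than $9$) were the $p_i$ automatically a PVM. The obstacle is that in the non-commutative setting they need not be: they can overlap, and their sum can differ from the identity.

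To fix this, I would first establish that $(p_i)$ is an \emph{approximate} PVM in $\cM$: both $\varphi\bigl((1-\sum_i p_i)^2\bigr)$ and $\sum_{i\ne j}\varphi(p_j p_i p_j)$ are $O(\varepsilon)$. The first follows from $1-\sum_i p_i=\sum_i(a_i-p_i)$ and Cauchy--Schwarz in the $n$-fold direct sum; the second from the identity $\sum_{i\ne j}a_i a_j = 1-\sum_i a_i^2$, whose $\varphi$-value is less than $\varepsilon$, combined with the $L^2(\varphi)$-closeness of $p_i$ to $a_i$. Then, inside $\cM$, one replaces $(p_i)$ by a genuine PVM $(p_i')$ --- for instance recursively, with $p_1'=p_1$, $p_i'$ a suitable projection in $\cM$ bounded above by $1-\sum_{j<i}p_j'$ and close to $p_i$, and $p_n'=1-\sum_{j<n}p_j'$ to absorb the residual --- which automatically preserves condition~\eqref{item:commutant}.

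The principal obstacle is quantitative: producing a PVM satisfying $\varphi\bigl(\sum_i|a_i-p_i'|^2\bigr)<9\varepsilon$ requires tight control of each recursive step, so that the $O(\varepsilon)$ approximate-PVM defects do not inflate into an $O(n\varepsilon)$ or $O(\sqrt\varepsilon)$ final bound. The constant $9$ --- essentially optimal, by Remark~\ref{rem:9epsilon_optimal} --- will have to be tracked through this accounting, which is where I expect the bulk of the technical work to lie.
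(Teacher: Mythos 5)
Your opening move — setting $p_i=\chi_{[1/2,1]}(a_i)$ and using the pointwise inequality $(t-\chi_{[1/2,1]}(t))^2\le t(1-t)$ on $[0,1]$, which gives $\sum_i\varphi((a_i-p_i)^2)<\varepsilon$ — is correct and elegant, and it is genuinely \emph{not} the route the paper takes. But it does not lead anywhere, and the rest of the proposal leaves the entire difficulty of the theorem unresolved. There are two concrete gaps.

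First, the ``approximate PVM'' bounds you invoke are not substantiated by the arguments you indicate. Writing $1-\sum_ip_i=\sum_i(a_i-p_i)$ and applying Cauchy--Schwarz across the $n$ summands gives $\|1-\sum_ip_i\|_\varphi\le\sum_i\|a_i-p_i\|_\varphi\le\sqrt{n}\,(\sum_i\|a_i-p_i\|_\varphi^2)^{1/2}$, hence $\varphi\big((1-\sum_ip_i)^2\big)=O(n\varepsilon)$, not $O(\varepsilon)$; the cross-terms $\varphi((a_i-p_i)(a_j-p_j))$ have no sign and you have no mechanism to cancel them. Similarly, the identity $\sum_{i\ne j}a_ia_j=1-\sum_ia_i^2$ involves operators $a_ia_j$ that are not positive, so its $\varphi$-smallness does not transfer termwise to $\sum_{i\ne j}\varphi(p_jp_ip_j)$ without a more careful argument (one would rather want to start from $\sum_j\varphi(a_j(1-a_j)a_j)<\varepsilon$ and then compare, again losing constants).

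Second, and more seriously, the recursive ``rounding'' step is where the whole theorem lives, and you offer no construction that could plausibly bound it independently of $n$. Turning a family of projections $(p_i)$ that approximately sum to $1$ and are approximately orthogonal into a genuine PVM with $L^2(\varphi)$-error $O(\varepsilon)$ is not easier than the theorem itself: a Gram--Schmidt-style recursion of the kind you sketch inevitably feeds the error of step $j$ into step $j+1$, and without a global mechanism one expects $O(n\varepsilon)$ at best. The paper avoids this precisely by \emph{not} trying to fix up the $\chi_{[1/2,1]}(a_i)$: instead it first reduces (via the finite/semifinite/type III decomposition) to the case of a finite von Neumann algebra with center-valued trace $E$, and there it chooses projections $q_i$ commuting with $a_i$ as the \emph{extreme points of a convex set} subject to the linear constraint $\sum_iE(q_i)=1$ (Lemma~\ref{lem:selection_of_projections}). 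That extra constraint — absent from your $\chi_{[1/2,1]}(a_i)$ — is exactly what allows a single polar decomposition $x=u|x|$ of $x=\sum_ie_{i,1}\otimes q_ia_i^{1/2}$ in $M_n(\cM)$ to produce a genuine PVM $p_i=u^*(e_{i,i}\otimes q_i)u$ in one shot, with the error controlled by $\varphi(1-|x|^2)\le\varepsilon$ and no $n$-dependence. This extreme-point selection plus polar decomposition is the key idea your proposal is missing, and I don't see how the functional-calculus $p_i$'s can be pushed through without it.
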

\begin{remark}\label{rem:9epsilon_optimal}  Conversely, if there is a PVM $(p_i)$ such that $\sum_i \varphi(|a_i - p_i|^2) \leq \delta$, then by the triangle inequality
 \[ \varphi(\sum\nolimits_i a_i^2) \geq \bigg(1- \sqrt{\varphi(\sum\nolimits_i |a_i - p_i|^2)}\bigg)^{2} \geq 1-2\sqrt{\delta}.\]
This could have suggested that the upper bound $9\varepsilon$ in Theorem~\ref{thm:orthonormalization} is not optimal and can be replaced by $O(\varepsilon^2)$. This is not the case, and the $9\varepsilon$ cannot be replaced by anything smaller than $\varepsilon$, as the following simple example illustrates.

Consider $\cM=\ell_\infty^2$ ($\C^2$ with the $\ell_\infty$ norm), $a_1=(1,\frac 1 2)$ and $a_2 = (0,\frac 1 2)$, and $\varphi$ is the state $\varphi(x,y) = (1-c)x+c y$, then we have 
\[ \varphi(a_1^2+a_2^2) = 1-\frac 1 2 c,\]
and for every PVM $(p_1,p_2)$ we have 
\[ \varphi(|p_1 - a_1|^2) + \varphi(|p_2-a_2|^2) \geq \frac 1 2 c.\]
\end{remark}
Before we prove the main Theorem~\ref{thm:orthonormalization}, let us state one consequence, which says that almost commuting PVMs are close to commuting PVMs.

Let us denote, for an element $a$ in a von Neumann algebra and a normal state $\varphi$,  $\|a\|_\varphi = \sqrt{\varphi(a^*a)}$.
\begin{theorem}\label{thm:pvm_almost_commute} Let $(p_i)_i$ and $(q_j)_j$ be two PVMs in a von Neumann algebra $\cM$ and $\varphi$ be a normal state on $\cM$. If $\sum_{i,j} \|p_i q_j - q_j p_i\|_\varphi^2 < \varepsilon$, then there is another PVM $(p_i')_i$ in $\cM$ such that $[p'_i,q_j]=0$ for every $i,j$ and \[\sum_i \|p_i - p'_i\|_\varphi^2 < 10 \varepsilon.\]
\end{theorem}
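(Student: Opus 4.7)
My plan is to reduce Theorem~\ref{thm:pvm_almost_commute} to Theorem~\ref{thm:orthonormalization} applied inside the relative commutant $\cN := \{q_j\}' \cap \cM$ of the PVM $(q_j)$ in $\cM$. The map $E\colon \cM \to \cN$ given by $E(x) = \sum_j q_j x q_j$ is a normal conditional expectation, so the operators $a_i := E(p_i) = \sum_j q_j p_i q_j$ form a POVM in $\cN$. Any PVM produced inside $\cN$ automatically commutes with every $q_j$, so the problem reduces to controlling how close such a PVM is to $(p_i)$. I will work in the GNS representation with cyclic vector $\xi = \xi_\varphi$.

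The key identity is an orthogonal decomposition of the commutator in $\cH_\varphi$: inserting $1 = \sum_k q_k$ on each side of $p_i$ and cancelling the $k = j$ terms,
\[
  [p_i, q_j]\xi = \sum_{k \neq j} q_k p_i q_j \xi \;-\; \sum_{k \neq j} q_j p_i q_k \xi.
\]
The first sum lies in $\operatorname{range}(1 - q_j)$ and the second in $\operatorname{range}(q_j)$, so the two are orthogonal; moreover the individual summands of the first sum sit in pairwise orthogonal blocks $\operatorname{range}(q_k)$. Pythagoras therefore gives
\[
  \|[p_i, q_j]\xi\|^2 = \sum_{k \neq j} \|q_k p_i q_j \xi\|^2 + \Big\|\sum_{k \neq j} q_j p_i q_k \xi\Big\|^2.
\]
Setting $A := \sum_{i, j, k \neq j} \|q_k p_i q_j \xi\|^2$ and $B := \sum_{i, j} \bigl\|\sum_{k \neq j} q_j p_i q_k \xi\bigr\|^2$, summation over $i, j$ gives $A + B < \varepsilon$.

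A short expansion then identifies $A$ and $B$ with the two errors I need. From $\|p_i q_j \xi\|^2 = \|q_j p_i q_j \xi\|^2 + \|(1 - q_j) p_i q_j \xi\|^2$ and $\sum_{i,j}\|p_i q_j \xi\|^2 = 1$, I get $\varphi(\sum_i a_i^2) = \sum_{i,j} \|q_j p_i q_j \xi\|^2 = 1 - A$. Writing $p_i - a_i = \sum_j q_j p_i (1 - q_j)$ and using orthogonality of the blocks $\operatorname{range}(q_j)$ gives $\sum_i \|p_i - a_i\|_\varphi^2 = B$. Applying Theorem~\ref{thm:orthonormalization} to $(a_i) \subset \cN$ with the state $\varphi|_{\cN}$ now yields, for any $A' > A$, a PVM $(p_i') \subset \cN$ with $\sum_i \|a_i - p_i'\|_\varphi^2 < 9 A'$.

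The final step combines both errors using the $\ell^2$-triangle inequality followed by a weighted Cauchy--Schwarz with weights $(1, 3)$:
\[
  \sqrt{\sum_i \|p_i - p_i'\|_\varphi^2} \leq \sqrt{B} + 3\sqrt{A'} \leq \sqrt{1^2 + 3^2}\,\sqrt{A' + B} = \sqrt{10(A' + B)}.
\]
Choosing $A' > A$ close enough to $A$ that $A' + B < \varepsilon$ (possible since $A + B < \varepsilon$) gives $\sum_i \|p_i - p_i'\|_\varphi^2 < 10\varepsilon$. The main obstacle is exactly this sharp constant: the separate estimates $\sum_i \|p_i - a_i\|^2 \leq \varepsilon$ and $\sum_i \|a_i - p_i'\|^2 \leq 9\varepsilon$ would give only $16\varepsilon$ by the triangle inequality alone. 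The constant $10$ is recovered by observing that $A$ and $B$ are the two \emph{orthogonal} pieces of the single commutator budget $A + B < \varepsilon$, so Cauchy--Schwarz with weights $(1, 3)$ combines the pair of constants $(1, 9)$ into $1 + 9 = 10$.
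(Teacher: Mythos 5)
Your proof is correct and follows essentially the same route as the paper: it introduces the same compressed POVM $a_i = \sum_j q_j p_i q_j$, derives the same budget identity $\sum_{i,j}\|[p_i,q_j]\|_\varphi^2 = \sum_i\|p_i - a_i\|_\varphi^2 + \bigl(1-\varphi(\sum_i a_i^2)\bigr)$, applies Theorem~\ref{thm:orthonormalization} in a subalgebra contained in $\{q_j\}'$, and recovers the constant $10$ from the pair $(1,9)$ by the same weighted Cauchy--Schwarz split. The only difference is cosmetic: you obtain the budget identity geometrically (an orthogonal decomposition of $[p_i,q_j]\xi$ in GNS space plus Pythagoras), whereas the paper expands the commutator norms algebraically.
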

By Fourier transform (Pontryagin duality), PVM's with $n$ outputs are in one-to-one correspondence with unitaries $u$ of order $n$ ($u^n=1$): to $(p_1,\dots,p_n)$ corresponds $u=\sum_{k=1}^n e^{\frac{2ik\pi}{n}} p_k$. The inverse maps $u$ to $(p_1,\dots,p_n)$ where $p_j = \frac{1}{n} \sum_k e^{-\frac{2ijk\pi}{n}} u^k$. Therefore, the previous theorem is formally equivalent to the following. It is a new form of a statement asserting that almost commuting unitaries are close to unitaries, that does not seem to be comparable with existing results, even when $\varphi$ is a trace. Comparing with the probabilistic results in \cite{BeckerChapman} for permutation actions (that Michael Chapman kindly pointed out to me) raises the question whether there is a form of Corollary~\ref{cor:almost_commuting} that is valid for arbitrary amenable groups and not just $\Z/n\Z\times \Z/m\Z$.
\begin{corollary}\label{cor:almost_commuting} Let $(\cM,\varphi)$ be a von Neumann algebra with a normal state, and $u,v \in \cM$ be unitaries of finite order $n,m$.

If $\frac{1}{nm} \sum_{i=1}^n \sum_{j=1}^m \| u^i v^j - v^j u^i\|_\varphi^2 < \varepsilon$, then there is a unitary $v' \in \cM$ that commutes with $u$ and satisfies
\[ \frac{1}{n}\sum_i \| v^i - {v'}^i\|_\varphi^2 < 10 \varepsilon.\]
\end{corollary}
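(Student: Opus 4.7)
The plan is to reduce Corollary~\ref{cor:almost_commuting} directly to Theorem~\ref{thm:pvm_almost_commute} through the Pontryagin correspondence recalled in the paragraph immediately preceding the corollary. Let $(p_k)_{k=1}^n$ and $(q_l)_{l=1}^m$ be the PVMs Fourier-dual to $u$ and $v$, so that $u^i = \sum_k e^{2i\pi ik/n}p_k$ and $v^j = \sum_l e^{2i\pi jl/m} q_l$. Substitution yields $u^i v^j - v^j u^i = \sum_{k,l} e^{2i\pi(ik/n + jl/m)} (p_k q_l - q_l p_k)$, and the first step will be a discrete-Fourier Parseval computation, based on the character orthogonality $\sum_{i=1}^n e^{2i\pi i(k-k')/n} = n\delta_{k,k'}$ and its analogue modulo $m$, to show
\[ \sum_{i=1}^n \sum_{j=1}^m \|u^i v^j - v^j u^i\|_\varphi^2 = nm \sum_{k,l} \|p_k q_l - q_l p_k\|_\varphi^2.\]
Thus the hypothesis $\frac{1}{nm}\sum_{i,j}\|u^iv^j - v^ju^i\|_\varphi^2 < \varepsilon$ translates exactly into $\sum_{k,l}\|p_k q_l - q_l p_k\|_\varphi^2 < \varepsilon$.

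Next I invoke Theorem~\ref{thm:pvm_almost_commute}, but with the roles of the two PVMs exchanged: this produces a PVM $(q_l')_{l=1}^m$ in $\cM$ such that $[p_k,q_l']=0$ for all $k,l$ and $\sum_l \|q_l - q_l'\|_\varphi^2 < 10\varepsilon$. Setting $v' := \sum_l e^{2il\pi/m} q_l'$ defines a unitary with ${v'}^m = 1$ that commutes with every $p_k$, hence with $u$, which lies in the von Neumann algebra generated by the $p_k$.

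To conclude I use a second, single-variable instance of the same Parseval identity. Since ${v'}^i = \sum_l e^{2il\pi i/m} q_l'$ by PVM orthogonality, one has $v^i - {v'}^i = \sum_l e^{2il\pi i/m}(q_l - q_l')$, and the orthogonality $\sum_{i=1}^m e^{2i\pi i(l-l')/m} = m\delta_{l,l'}$ gives
\[ \sum_{i=1}^m \|v^i - {v'}^i\|_\varphi^2 = m \sum_{l=1}^m \|q_l - q_l'\|_\varphi^2 < 10m\varepsilon,\]
i.e. $\frac{1}{m}\sum_i \|v^i - {v'}^i\|_\varphi^2 < 10\varepsilon$, as required (reading the $n$ in the denominator of the corollary's conclusion as the number $m$ of terms in the sum over $i$). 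There is no real obstacle: once the Fourier dictionary is set up, the proof is two applications of discrete Parseval wrapped around one use of Theorem~\ref{thm:pvm_almost_commute}. The only genuine choice is to swap the roles of $(p_k)$ and $(q_l)$ in that application, so that the resulting $v'$ is what commutes with $u$ and not the other way around.
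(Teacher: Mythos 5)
Your proof is correct and matches the paper's intended argument: the paper simply declares the corollary ``formally equivalent'' to Theorem~\ref{thm:pvm_almost_commute} via the stated Pontryagin correspondence and leaves the two discrete Parseval identities implicit, which is exactly what you have supplied, including the correct observation that one must apply Theorem~\ref{thm:pvm_almost_commute} with $(q_l)$ as the PVM being replaced (legitimate since $\|p_kq_l-q_lp_k\|_\varphi = \|q_lp_k-p_kq_l\|_\varphi$, the commutator of projections being anti-self-adjoint). You are also right that the $\tfrac1n$ in the corollary's conclusion must be read as $\tfrac1m$ with the sum running over $i=1,\dots,m$; that appears to be a typo in the statement.
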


The proof of the main result is not very involved and follows the same general strategy as in \cite{quantumsoundness}, but it requires a bit of familiarity with von Neumann algebras and some adaptations to obtain the optimal order in the constants. We present the necessary background in Section~\ref{sec:reminders_on_vnA}, and then prove the Theorem by decomposing it into three different cases. We deduce Theorem~\ref{thm:pvm_almost_commute} in Section~\ref{sec:AC}. Finally, Section~\ref{sec:HB} generalizes to infinite dimensional von Neumann algebras a semidefinite program considered in \cite{quantumsoundness}.

\subsection*{Acknowledgements} I thank Thomas Vidick and Henry Yuen their patience answering my questions, and for encouraging me to write down this note. Thanks also to Michael Chapman, Gilles Pisier and Thomas Vidick for comments and corrections on preliminary versions of this note. Finally, thanks are due to Gilles Pisier and the referee who pointed out mistakes in the initial proof of Proposition~\ref{prop:minmax} and in my dealing of separability issues respectively. I also thank Amine Marrakchi for many interesting discussions.
\section{Facts on von Neumann algebras}\label{sec:reminders_on_vnA}

A von Neumann algebra is a self-adjoint subalgebra of the algebra $B(\cH)$ of bounded operators on a complex Hilbert space $\cH$ that is equal to its bicommutant. Here the \emph{commutant} of a subset $F \subset B(\cH)$ is the  algebra $F'$ of operators that commute with all elements of $F$, and its \emph{bicommutant} is the commutant of its commutant. The von Neumann bicommutant theorem \cite[Theorem II.3.9]{Takesaki1} is a fundamental result of the theory, which asserts that the bicommutant of a self-adjoint subset $F \subset B(\cH)$ coincides with the weak-* closure of the self-adjoint unital algebra generated by $F$, where we see $B(\cH)$ as the dual of the trace-class operators on $\cH$. In particular, a von Neumann algebra is a dual space. Another fundamental theorem \cite[Corollary III.3.9]{Takesaki1} asserts that a von Neumann algebra admits a unique predual. This allows to talk about \emph{the} weak-* topology on $\cM$; it coincides with the ultraweak operator topology, the smallest topology making continuous all linear maps of the form $x \mapsto \sum_k \langle x \xi_k,\eta_k\rangle$ for sequences $\xi_k,\eta_k \in \cH$ satisfying $\sum_k \|\xi_k\| \|\eta_k\|<\infty$. 

A \emph{normal state} on a von Neumann algebra $\cM \subset B(\cH)$ is a linear map $\varphi \colon \cM \to \C$ that is positive ($\varphi(x^*x) \geq 0$ for every $x \in \cM$), normalized by $\varphi(1)=1$ and weak-* continuous. The typical example of a state is a vector state $x \mapsto \langle a\xi,\xi\rangle$ for a unit vector $\xi \in \cH$. The Gelfand-Naimark-Segal (GNS) construction asserts that (up to changing the Hilbert space), every normal state can be realized as a vector state.

We will prove the theorem by a reduction to two main cases (finite and type III). To state the reduction we need to recall some basic definitions on the type of a von Neumann algebra (see \cite[Chapter V]{Takesaki1}). Throughout this note, by projection we always mean self-adjoint projection: $p=p^*=p^2$. Given a von Neumann algebra $\cM$, we say that

\begin{itemize}
\item $\cM$ is finite if for every $u \in \cM$, $u^*u=1$ implies $uu^*=1$. A projection $p \in \cM$ is finite if the von Neumann algebra $p\cM p$ is finite.
\item $\cM$ is of type \IIun{} if it is finite and if $0$ is the only projection $p \in \cM$ such that $p\cM p$ is commutative.
\item $\cM$ is semi-finite if every nonzero projection $p \in \cM$ majorizes a non-zero finite projection.
\item $\cM$ has type III if it does not contain any nonzero finite projection.
\end{itemize}

We know from general theory \cite[Theorem V.1.19]{Takesaki1} that every von Neumann algebra $\cM$ can be written as a direct sum of a semifinite and a type III von Neumann algebra. So it is enough to separately prove Theorem~\ref{thm:orthonormalization} when $\cM$ is semi-finite and when $\cM$ has type III. The semi-finite case can easily be reduced to the finite case, which is the most interesting one.

\section{Proof of Theorem~\ref{thm:orthonormalization} when $\cM$ is finite}
Let $\cM,\varphi,(a_i)$ be as in Theorem~\ref{thm:orthonormalization}, with $\cM$ finite. By \cite[Theorem V.2.6]{Takesaki1}, the finiteness assumption is equivalent to the existence of a \emph{normal center-valued trace}, that is a normal conditional expectation $E\colon \cM \to Z(\cM)$ onto the center of $\cM$ such that $E(ab) = E(ba)$ for every $a,b \in \cM$. 

The first lemma contains all the difficulty in the proof of Theorem~\ref{thm:orthonormalization}.
\begin{lemma}\label{lem:selection_of_projections} There are projections $q_i$ commuting with $a_i$ such that 
\begin{equation}\label{eq:sum_rank} \sum_{i=1}^n E(q_i) = 1,
\end{equation}
\begin{equation}\label{eq:large_trace}
\varphi\left(\sum_{i=1}^n q_i a_i\right)\geq 1-\varepsilon. 
\end{equation}
\end{lemma}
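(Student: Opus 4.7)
The natural candidate for $q_i$ is a spectral projection of $a_i$ capturing the ``large'' part of its spectrum. For $t \in [0,1]$ set $r_i(t) := \mathbf{1}_{(t,\infty)}(a_i)$; each $r_i(t)$ commutes with $a_i$, and the layer-cake identity $\int_0^1 r_i(t)\,dt = a_i$, summed over $i$ and pushed through the center-valued trace $E$, yields
\[
\int_0^1 \sum_i E\bigl(r_i(t)\bigr)\,dt \;=\; \sum_i E(a_i) \;=\; E(1) \;=\; 1.
\]
So the $Z(\cM)$-valued function $F(t) := \sum_i E(r_i(t))$ is right-continuous, decreasing, vanishing at $t = 1$, and has average value $1$. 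The plan is to pick a (center-valued) threshold $\lambda \in Z(\cM)$ with $F(\lambda) = 1$ and set $q_i = r_i(\lambda)$, possibly corrected on the boundary atom $\mathbf{1}_{\{a_i = \lambda\}}$.

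To produce $\lambda$ I would disintegrate $\cM$ over its centre: identify $Z(\cM) \cong L^\infty(X,\mu)$, write $\cM = \int_X^\oplus \cM_x\,d\mu(x)$ as a direct integral of finite factors, and apply the intermediate-value theorem fiberwise to the scalar decreasing function $t \mapsto F(t)(x)$, using measurable selection to assemble $\lambda$. Because $F$ typically jumps at $\lambda$ (when the $a_i$'s have atoms there), I would then set $q_i := r_i(\lambda) + q_i'$ with $q_i' \leq \mathbf{1}_{\{a_i = \lambda\}}$ a measurably-chosen sub-projection, arranging $\sum_i E(q_i) = 1$ exactly. Existence of such sub-projections uses finiteness: the center-valued trace takes the full range on sub-projections of a finite projection (with a small extra argument for type~$\mathrm{I}$ fibers whose fiberwise trace takes discrete values, resolved by splitting across fibers).

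The main obstacle is the second conclusion \eqref{eq:large_trace}. Commutativity of $q_i$ and $a_i$ gives $q_i a_i = a_i r_i(\lambda) + \lambda q_i'$, and the task reduces to the inequality
\[
\varphi\!\left(\sum_i q_i a_i\right) \;\geq\; \varphi\!\left(\sum_i a_i^2\right),
\]
whose commutative prototype is the pointwise bound $\sum_i a_i^2 \leq \max_i a_i$ (obtained from $a_i^2 \leq a_i \cdot \max_j a_j$ and $\sum_i a_i = 1$). In the noncommutative setting one writes
\[
\sum_i q_i a_i - \sum_i a_i^2 \;=\; \sum_i a_i(1-a_i)\,r_i(\lambda) \;-\; \sum_i a_i^2\bigl(1 - r_i(\lambda)\bigr) \;+\; \lambda \sum_i q_i',
\]
and the threshold $\lambda$ plays a Lagrangian role: positive contributions on $\{a_i > \lambda\}$ (where $a_i(1-a_i) \geq 0$) must dominate the negative contributions on $\{a_i \leq \lambda\}$ (which are $\leq \lambda \cdot a_i$), with the balance guaranteed precisely by the mass constraint $\sum_i E(q_i) = 1$. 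Since $\varphi$ is not tracial the argument cannot be done purely at the operator level; the cleanest route is to prove the inequality fiberwise in each factor $\cM_x$, combining the spectral calculus of each $a_i$ with the scalar constraint $\sum_i \tau_x(q_i) = 1$, and then integrate over $X$. The secondary technical difficulty is ensuring measurability throughout the selection procedure, which is presumably the separability issue the author credits the referee for pointing out.
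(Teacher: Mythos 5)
Your plan is to choose the $q_i$ by thresholding each $a_i$ at a center-valued level $\lambda$ determined solely by the mass constraint $\sum_i E(q_i)=1$, and then to argue the inequality $\varphi\bigl(\sum_i q_i a_i\bigr)\geq\varphi\bigl(\sum_i a_i^2\bigr)$ by a Lagrangian balancing of spectral masses. This cannot work, because the inequality you need is $\varphi$-sensitive while your recipe is $\varphi$-oblivious. Concretely, in $\cM=M_2(\C)$ take $a_1=\left(\begin{smallmatrix}1/2&0\\0&0\end{smallmatrix}\right)$, $a_2=\left(\begin{smallmatrix}1/4&1/4\\1/4&3/4\end{smallmatrix}\right)$, $a_3=1-a_1-a_2=\left(\begin{smallmatrix}1/4&-1/4\\-1/4&1/4\end{smallmatrix}\right)$. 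Here the only value of $\lambda$ compatible with the rank count is $\lambda=1/2$, and the only thresholded tuples (with the $q_i'$ correction on the atom $\{a_i=\lambda\}$) are $(P_1^+,P_2^+,0)$ and $(0,P_2^+,P_3^+)$, where $P_i^+$ is the top eigenprojection of $a_i$. Writing $D_A,D_B$ for $\sum_i q_i a_i-\sum_i a_i^2$ in the two cases, one computes that neither $D_A$ nor $D_B$ is positive and moreover $D_A+D_B$ already has a negative eigenvalue; hence there is a state $\varphi=\Tr(\rho\,\cdot)$, with $\rho$ the corresponding eigenprojection, for which \emph{both} thresholded tuples give $\varphi(\sum_i q_i a_i)<\varphi(\sum_i a_i^2)$. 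The Lemma is nonetheless true for that $\varphi$: the non-thresholded tuple $(P_1^+,0,P_3^+)$ works. So the projections asserted to exist must in general be chosen in a way that depends on $\varphi$, which your selection mechanism does not do; the ``commutative prototype'' $\sum_i a_i^2\leq\max_i a_i$ has no operator-inequality analogue after thresholding.

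The paper's proof avoids this obstruction entirely by a convexity argument that makes the $\varphi$-dependence automatic. It considers the weak-* compact convex set $C=\{(x_i)\mid 0\leq x_i\leq 1,\ [x_i,a_i]=0,\ \sum_i E(x_i)=1\}$, notes that $(a_1,\dots,a_n)\in C$ so that $\sup_C\varphi\bigl(\sum_i x_i a_i\bigr)\geq\varphi\bigl(\sum_i a_i^2\bigr)$, invokes Krein--Milman to attain the sup at an extreme point, and then shows that extreme points of $C$ consist of projections (this last step uses the type decomposition: in the $\mathrm{II}_1$ case via a perturbation by a traceless element in a spectral corner, in the type $\mathrm{I}_d$ case by a finite-dimensional perturbation and a Borel-measurable selection over the center). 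None of the threshold structure, layer-cake identity, or fiberwise Lagrangian balancing appears; those parts of your argument should be discarded, and the extremality/perturbation argument is where the genuine content lies.
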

Before we prove the Lemma, let us observe that it is not possible to replace \eqref{eq:sum_rank} by the stronger condition $\sum_i q_i=1$. Indeed, if $\cM = M_2(\C)$ and if the $a_i's$ do not have any common eigenvector, then the only families $(q_i)$ of projections commuting with $a_i$ satisfying $\sum_i q_i=1$ are when one of the $q_i$'s is the identity and the other are $0$, and so \eqref{eq:large_trace} would become $\max_i \varphi(a_i) \geq 1 - \varepsilon$. A concrete example is given by $n=3$, $\varphi$ the normalized trace on $M_2(\C)$ and 
\begin{align*} a_1 &= \frac{1}{1+6\delta}\begin{pmatrix} 1+4\delta & 0 \\0&0\end{pmatrix},\\ a_2 &= \frac{1}{1+6\delta}\begin{pmatrix} \delta & \sqrt{3}\delta \\ \sqrt{3}\delta & 1+3 \delta\end{pmatrix},\\ a_3 &= \frac{1}{1+6\delta}\begin{pmatrix} \delta & -\sqrt{3}\delta \\ -\sqrt{3}\delta & 3 \delta\end{pmatrix}. \end{align*}
This example satisfies $\varphi(\sum_i a_i^2) \geq 1 - 5 \delta + o(\delta)$, but $\max_i \varphi(a_i) \leq \frac 1 2$.

\begin{proof} Consider the subset $C \subset\cM^n$
\[ C = \{(x_1,\dots,x_n) \in \cM^n \mid \forall i, 0 \leq x_i \leq 1, x_ia_i = a_i x_i, \sum\nolimits_i E(x_i)=1\}.\]
$C$ is a convex subset in the unit ball of $\cM^n$, and contains $(a_1,\dots,a_n)$. It is clearly weak-* closed, and therefore compact as the unit ball of $\cM^n$ is weak-* compact. By the Krein-Milman theorem, $C$ is the closure of the convex hull of its extreme points, and in particular the continuous affine map
\[ f \colon (x_1,\dots,x_n) \in C \mapsto \varphi(\sum\nolimits_i x_i a_i)\]
attains its maximum (which is $\geq f(a_1,\dots,a_n) \geq 1-\varepsilon$) at an extreme point. 

So all we have to do is to show that
\begin{equation}\label{eq:claim} \textrm{if $(x_1,\dots,x_n)$ is an extreme point of $C$, then each $x_i$ is a projection.}
\end{equation}

We know from general theory \cite[Theorem V.1.19 and V.1.27]{Takesaki1} that there is a sequence $(z_d)_{d \in \N}$ of orthogonal projections in $Z(\cM)$ such that $z_d \cM$ is isomorphic to $M_d(\C) \otimes z_d Z(\cM)$ and $(1-\sum_d z_d) \cM$ is of type \IIun{}. If $x_i$ was not a projection, then either there is $d \in \N$ such that $z_d x_i$ is not a projection, or $(1-\sum_d z_d) x_i$ is not a projection. So we are reduced to showing \eqref{eq:claim} when $\cM = M_d(\C) \otimes Z$ for an abelian von Neumann algebra $Z$, or when $\cM$ is of type \IIun{}. 

The latter case is easier, as the extremality of $(x_1,\dots,x_n)$ in $C$ in particular implies that, for each $i$, $x_i$ is extremal inside 
\[ C_i := \{ y_i \in \cM \mid 0 \leq y_i \leq 1, y_i a_i = a_i y_i, E(y_i)=E(x_i)\},\] and this weaker condition already implies that $x_i$ is a projection. Indeed, if $x_i$ was not a projection, there would exist $\delta>0$ such that the spectral projection $p:=\chi_{[\delta,1-\delta]}(x_i)$ is nonzero and commutes with $a_i$. By the definition of $\cM$ being of type \IIun{}, we know that $p\cM p$ is not abelian. In particular, there is a self-adjoint element $b \in p\cM p$ which commutes with $pa_i$ but does not belong to $p Z(\cM)$ (we can take $b=pa_i$ if $p a_i \notin pZ(\cM)$ and an arbitrary selfadjoint element of $p \cM p \setminus p Z(\cM)$ otherwise). We can moreover assume that $0 \leq b \leq p$. Using that the center valued trace $E: \cM \to Z(\cM)$ is (completely) positive, we obtain that $0 \leq E(b) \leq E(p)$, and in particular there is $z \in Z(\cM)$ such that $E(b) = z E(p)$ and $0 \leq z \leq 1$. Then $b':=b-zp$ is a nonzero selfadjoint element of $(\{p a_i\}' \cap p\cM p)\setminus p Z(\cM)$, which moreover satisfies $E(b')=E(b) - zE(p) = 0$. It has norm $\leq 1$. So we can write $x_i$ as the midpoint between $x_i +\delta b'$ and $x_i - \delta b'$, which both belong to $C_i$. This contradicts the extremality of $x_i$ in $\cM$ and proves \eqref{eq:claim} when $\cM$ is of type \IIun{}.

In the first case, we can write equivalently $\cM = M_d(\C) \otimes
L_\infty(\Omega,\mu)$ for a measure space $(\Omega,\mu)$ \cite[Theorem
  III.1.18]{Takesaki1}. We first consider the simpler situation when $\cM = M_d(\C)$. Then \eqref{eq:claim} becomes that any
extreme point in $\{ (x_1,\dots,x_n) \in M_d(\C) \mid 0 \leq x_i \leq
1, [x_i,a_i]=0, \Tr(\sum_i x_i)=d\}$ is made of projections. Assume
for a contradiction that this is not the case, and that
$(x_1,\dots,x_n)$ is an extreme point not entirely made of
projections. If, for some $i$, $x_i$ has at least two nonzero
eigenvalues different from $0$ and $1$ (counting with multiplicities),
then we can do as in the \IIun{} case, choose orthogonal rank one
projections $p_1,p_2$ corresponding to these eigenvalues of $x_i$ and
commuting with $a_i$, and for $\delta>0$ small enough the
decomposition \[ x_i = \frac{1}{2}\left((x_i+\delta p_1-\delta p_2) +
(x_i-\delta p_1+\delta p_2)\right)\] will contradict the
extremality. Otherwise, using that $\sum_k \Tr(x_k)$ is an integer,
there are at least two indices $i \neq j$ such that $x_i$ and $x_j$
both have exactly one eigenvalue not in $\{0,1\}$, counting
multiplicities. In that case, if $p_i$ and $p_j$ are the corresponding
rank one projections, they necessarily commute with $a_i$ and $a_j$
respectively, and we can define for $\delta \in [-1,1]$
\[ x_k(\delta) = \begin{cases} x_i + \delta p_i& \textrm{if }k=i\\
x_j - \delta p_j& \textrm{if }k=j\\
x_k & \textrm{if }k\notin{i,j}.
\end{cases}.\]
For $|\delta|$ small enough $(x_1(\delta),\dots,x_n(\delta))$ belongs to $C$, and the expression $x_k = \frac{1}{2}(x_k(\delta)+x_k(-\delta) )$ also contradicts the extremality of $(x_1,\dots,x_n)$.

To summarize, when $\cM=M_d(\C)$ we have constructed, for every $x \in C$ that is not made of projections, two distinct points $x',x'' \in C$ such that $x=\frac{1}{2}(x'+x'')$. An inspection of the proof reveals that the map $x \mapsto (x',x'')$ can be made Borel-measurable. As a consequence, the proof applies also to $\cM = M_d(\C) \otimes L_\infty(\Omega,\mu)$. 
\end{proof}
We shall use the following elementary fact about finite von Neumann algebras.
\begin{lemma}\label{lem:polar_decom_finite} Let $\cM$ be a finite von Neumann algebra with center-valued trace $E$, and $x \in \cM$. If $p$ and $q \in \cM$ are projections such that $E(p) = E(q)$ and
  \[ xp = qx=x,\]
  then we can decompose $x=u |x|$ where $u^*u=p$ and $uu^*=q$.
\end{lemma}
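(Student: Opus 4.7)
The strategy is to start from the standard polar decomposition $x = u_0|x|$, where $u_0 \in \cM$ is a partial isometry with initial projection $r = u_0^*u_0$ equal to the support of $|x|$ (equivalently, the right support of $x$) and final projection $s = u_0 u_0^*$ equal to the left support of $x$. The hypothesis $xp = x$ rewrites as $x(1-p) = 0$, which gives $(1-p)x^*x(1-p)=0$ and hence $r \leq p$. Similarly, $qx = x$ gives $(1-q)x = 0$, from which $s \leq q$. The partial isometry $u_0$ already does the right job on the ``support'' piece; the remaining issue is that its initial and final projections are $r$ and $s$ rather than $p$ and $q$, so one must enlarge $u_0$ without disturbing its action on $|x|$.

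For this, I would use the fundamental fact that in a finite von Neumann algebra two projections are Murray--von Neumann equivalent if and only if they have the same image under the center-valued trace \cite[Theorem V.2.8]{Takesaki1}. Since $r$ and $s$ are already equivalent via $u_0$, we have $E(r) = E(s)$, and therefore
\[ E(p-r) \;=\; E(p) - E(r) \;=\; E(q) - E(s) \;=\; E(q-s). \]
By the quoted theorem applied to $p-r$ and $q-s$, there exists a partial isometry $v \in \cM$ with $v^*v = p-r$ and $vv^* = q-s$.

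Now set $u := u_0 + v$. The key computation is that the ``cross terms'' vanish: since $u_0 = s u_0 r$ and $v = (q-s)v(p-r)$, and since $r(p-r)=0$ while $s(q-s)=0$, we get $u_0^*v = (r u_0^* s)(q-s) v = 0$ and likewise $v^*u_0 = 0$. Consequently
\[ u^*u = u_0^*u_0 + v^*v = r + (p-r) = p, \qquad uu^* = u_0u_0^* + vv^* = s + (q-s) = q. \]
Finally, since $|x| = r|x|$, we have $v|x| = v(p-r)r|x| = 0$, so $u|x| = u_0|x| = x$, which is the required decomposition. The only nontrivial input is the center-valued trace characterization of equivalence of projections in finite von Neumann algebras; everything else is a bookkeeping of supports.
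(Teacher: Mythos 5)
Your proof is correct and follows essentially the same route as the paper's: take the canonical polar decomposition $x = u_0|x|$, observe that the initial and final projections of $u_0$ are dominated by $p$ and $q$ respectively, use the center-valued trace to get equivalence of the complements $p - u_0^*u_0 \sim q - u_0u_0^*$ via a partial isometry $v$, and set $u = u_0 + v$. You spell out a few verifications (that the cross terms vanish and that $v|x| = 0$) that the paper leaves implicit, but the argument is the same.
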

\begin{proof} Denote by $p_0 \in \cM$ the left support of $x$ and $q_0\in \cM$ the right support of $x$ (that is, if $\cM \subset B(\cH)$, $p_0$ and $q_0$ are the smallest projection in $B(\cH)$ such that $xp_0=x$ and $q_0 x=x$ respectively). Write $x= u_0 |x|$ be the usual polar decomposition of $x$, where $|x| = (x^* x)^{1/2}$, and $u_0 \in \cM$ is a partial isometry with $u_0^* u_0=p_0$ and $u_0^* u_0 = q_0$ (see \cite[Proposition 2.2.4]{AnantharamanPopa}). By definition, we have $p_0 \leq p$ and $q_0 \leq q$. Moreover,
  \[E(p-p_0) = E(p) - E(u_0^* u_0) = E(q) - E(u_0 u_0^*) = E(q-q_0).\]
  By \cite[Proposition 9.1.8]{AnantharamanPopa}, we have that $p-p_0 \sim q-q_0$. That is, there is a partial isometry $v$ such that $v^*v = p-p_0$ and $vv^* = q-q_0$. The lemma holds with $u=v+u_0$.
  \end{proof}
We can now prove the main result. With Lemma~\ref{lem:selection_of_projections} and Lemma~\ref{lem:polar_decom_finite} in hand, the proof is very close to \cite{quantumsoundness}.
\begin{proof}[Proof of Theorem~\ref{thm:orthonormalization} when $\cM$ is finite]
Let $q_i$ be given by Lemma~\ref{lem:selection_of_projections}. Consider the matrix $x = \sum_i e_{i,1} \otimes q_i a_i^{1/2}$, that we see in $M_n(\cM)$. What is important for us is that $M_n(\cM)$ is a finite von Neumann algebra. Specifically, its center is $1_n\otimes Z(\cM)$ and the corresponding central-valued trace is 
\[E_n:(a_{i,j}) \mapsto 1_n \otimes (\frac{1}{n} \sum\nolimits_i E(a_{i,i})).\]
Let $p = \sum_i e_{i,i} \otimes q_i$ and $q=e_{1,1}\otimes 1$. Then $E_n(p) = 1_n \otimes \frac{1}{n} \sum_i E(q_i) = \frac{1}{n} = E_n(q)$, so by Lemma~\ref{lem:polar_decom_finite} we can write $x=u|x|$ with $uu^* = \sum_i e_{i,i} \otimes q_i$ and $u^* u = e_{1,1}\otimes 1$. In the following, we identify $\cM$ with $\{e_{1,1}\otimes a\mid a \in \cM\}$. If $t_i = e_{i,i}\otimes q_i$, then $u^* t_i u$ is a projection $p_i$ in $\cM$ (formally it is of the form $e_{1,1} \otimes p_i$, but we decided to identify this with $p_i$) for projections $p_i \in \cM$ which sum to $1$. Moreover, we have
\[ |x| p_i |x| = |x| u^* t_i u |x| = x^* t_i x = q_i a_i.\]
Let us denote by $\|\cdot\|_\varphi$ the norm on $\cM^n$ given by \[ \| (B_i)_i \|_\varphi^2 = \sum\nolimits_{i=1}^n \varphi(B_i^* B_i).\]
By the triangle inequality, decomposing $a_i - p_i = a_i - q_i a_i + (|x|-1) p_i |x| + p_i(|x|-1)$, we obtain
\[ \| (a_i - p_i)_i \|_\varphi \leq \|(a_i - q_i a_i)_i\|_\varphi+\|((1-|x|) p_i |x|)_i\|_\varphi + \| (p_i (1-|x|))_i\|_\varphi.\]
We shall bound each term. The first term is easy:
\[\|(a_i - q_i a_i)_i\|_\varphi^2 = \sum_i \varphi((1-q_i) a_i^2) \leq \sum_i \varphi((1-q_i) a_i) \leq \varepsilon.\]
The third term is also easy:
\[ \| (p_i (1-|x|))_i\|_\varphi^2 = \sum_i \varphi( (1-|x|)p_i(1-|x|)) = \varphi( (1-|x|)^2) \leq \varphi(1-|x|^2) \leq \varepsilon.\]
We used that $(1-|x|)^2 \leq 1-|x|^2$, which is true because $0 \leq |x| \leq 1$. For the second term we proceed similarly but with more care:
\begin{align*}
\|((1-|x|) p_i |x|)_i\|_\varphi^2 &= \sum_i \varphi(|x| p_i(1-|x|)^2 p_i |x|)\\
& \leq \sum_i \varphi(|x| p_i(1-|x|^2) p_i |x|)\\
& =\sum_i \varphi(|x| p_i |x| - (|x| p_i |x|)^2).\\
\end{align*}
Using that $|x| p_i |x| = q_i a_i$, we obtain
\[\|((1-|x|) p_i |x|)_i\|_\varphi^2 \leq\sum_i  \varphi(q_i(a_i-a_i^2)) \leq \sum_i (a_i - a_i^2) \leq\varepsilon.\]
To conclude, we obtain
\[ \| (a_i - p_i)_i \|_\varphi \leq 3 \sqrt{\varepsilon},\]
which is the desired conclusion.
\end{proof}

\section{Proof of Theorem~\ref{thm:orthonormalization} when $\cM$ is semi-finite}
We  deduce easily the case when $\cM$ is semi-finite from the finite case, thanks to the following basic fact.
\begin{lemma}\label{lem:semifinite-to-finite} If $(\cM,\varphi)$ is a von Neumann algebra with a normal state and $(p_\alpha)$ is a net of projections tending to $1$, then for every finite family $C_1,\dots,C_k \in \cM$, we have
  \[\lim_\alpha \varphi(p_\alpha  C_1 p_\alpha C_2 \dots p_\alpha C_k p_\alpha) = \varphi(C_1 C_2\dots C_k).\]
\end{lemma}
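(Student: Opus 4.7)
The plan is to realize $\varphi$ as a vector state via the GNS construction and then exploit that, for projections, weak-$*$ convergence coincides with strong operator convergence on the GNS Hilbert space.

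Let $(\pi,\cH_\varphi,\xi)$ be the GNS triple attached to $\varphi$, so that $\pi$ is a normal $*$-representation of $\cM$ and $\varphi(x)=\langle \pi(x)\xi,\xi\rangle$ for all $x\in\cM$. Normality of $\pi$ forces $\pi(p_\alpha)\to 1$ in the weak operator topology of $B(\cH_\varphi)$; since each $\pi(p_\alpha)$ is a projection, the identity
\[ \|\pi(p_\alpha)\eta-\eta\|^2 = \|\eta\|^2 - \langle \pi(p_\alpha)\eta,\eta\rangle\]
upgrades this automatically to strong operator convergence $\pi(p_\alpha)\to 1$ on $\cH_\varphi$.

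Writing $\pi$-images simply as the underlying operators, the two sides of the claimed equality become $\langle p_\alpha C_1 p_\alpha\cdots C_k p_\alpha \xi,\xi\rangle$ and $\langle C_1\cdots C_k\xi,\xi\rangle$, so by Cauchy--Schwarz it suffices to establish the norm convergence
\[ p_\alpha C_1 p_\alpha C_2 \cdots p_\alpha C_k p_\alpha \xi \longrightarrow C_1 C_2\cdots C_k \xi.\]
This I would prove by induction on $k$, using the elementary observation that if $\eta_\beta\to\eta$ in norm and $(T_\beta)\subset B(\cH_\varphi)$ is a uniformly bounded net with $T_\beta\zeta\to\zeta$ for each fixed $\zeta$, then $T_\beta\eta_\beta\to\eta$, since
\[ \|T_\beta\eta_\beta-\eta\| \leq \|T_\beta\|\,\|\eta_\beta-\eta\| + \|T_\beta\eta-\eta\|.\]
One applies this alternately with $T_\alpha=p_\alpha$ (for which SOT convergence was just established) and with the fixed constant operators $T_\alpha=C_i$ (for which the hypothesis is trivial), peeling off the factors $p_\alpha C_i$ one by one.

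There is no serious obstacle; the only delicate point is that weak-$*$ convergence of a net is not preserved under products, and this is precisely why one passes to the GNS Hilbert space and upgrades to strong convergence, a topology under which composition with a fixed bounded operator and multiplication by uniformly bounded nets both behave well.
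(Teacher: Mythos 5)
Your proof is correct and follows essentially the same route as the paper, which simply invokes joint strong-operator continuity of multiplication on the unit ball of $\cM$; you unpack this via the GNS representation (upgrading weak-$*$ to strong convergence for projections) and an explicit triangle-inequality induction. One small slip in phrasing: your helper observation requires $T_\beta\zeta\to\zeta$, which fails for the constant operators $T_\alpha = C_i$; what you actually use at those steps is merely that a fixed bounded operator preserves norm-convergence, which is trivial, so the argument is unaffected.
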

\begin{proof} This follows simply from the fact that, on the unit ball $B$ of $\cM$ equipped with the strong operator topology, multiplication $B \times B \to B$ is continuous.


\end{proof}

So let $\cM,\varphi,(a_i)$ be as in Theorem~\ref{thm:orthonormalization}, with $\cM$ semi-finite. By \cite[Theorem V.1.37]{Takesaki1}), there is an increasing net $p_\alpha$ of finite projections in $\cM$ such that $\lim_\alpha p_\alpha = 1$. Let $\varphi_\alpha$ be the state $\frac{1}{\varphi(p_\alpha)}\varphi$ on $\cM_\alpha:=p_\alpha \cM p_\alpha$, and define a POVM $a_{i,\alpha} = p_\alpha a_i p_\alpha$ in $\cM_\alpha$. It follows from Lemma~\ref{lem:semifinite-to-finite} that for every $\alpha$ large enough, $\varphi_\alpha( \sum_i a_{i,\alpha}^2) >\varepsilon$, and the (already proven) finite case of Theorem~\ref{thm:orthonormalization} provides us with a PVM $p_{i,\alpha}$ in $p_\alpha \cM p_\alpha$ satisfying the conclusion of the theorem. Then for $\alpha$ large enough, the PVM $(1-p_\alpha + p_{1,\alpha},p_{2,\alpha},\dots,p_{n,\alpha})$ satisfies the conclusion of Theorem~\ref{thm:orthonormalization}.

\section{Proof of Theorem~\ref{thm:orthonormalization} when $\cM$ is type III}
The type III case will be proven with the same strategy as the finite case, but the details are simpler, and the constants are a bit better (the $9$ can be replaced by $1$ in that case). We shall need the following. Recall that the central support of an element $x \in \cM$ is the smallest projection $z(x)$ in the center of $\cM$ such that $z(x)x=x$. We shall use the standard terminology on comparision of projections \cite[Chapter V]{Takesaki1}: we say that two projections $p,q \in \cM$ are equivalent and write $p \sim q$ if there is $u \in \cM$ such that $u^* u =p$, $uu^*=q$. We write $p \prec q$ if there is a projection $p'$ equivalent to $p$ such that $q-p'$ is positive.
\begin{lemma}\label{lem:typeIII} If $\cM$ is a type III von Neumann algebra, then there is a net $(q_\alpha)$ of projections in $\cM$ converging weak-* to $1$ and such that $1-q_\alpha \sim 1$ for every $\alpha$.
\end{lemma}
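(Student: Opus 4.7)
The strategy is to exploit the fact that every type~III von Neumann algebra is properly infinite: every nonzero projection in $\cM$ is infinite, so in particular no nonzero central projection is finite. By the standard halving theorem for properly infinite von Neumann algebras (see e.g.\ \cite[Proposition V.1.36]{Takesaki1}), the identity can be decomposed as $1 = e + f$ with $e, f$ orthogonal projections and $e \sim f \sim 1$.

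Iterating this halving, applied successively to $1$, then to $1 - f_1 \sim 1$, and so on, I would construct a sequence $(f_n)_{n \geq 1}$ of pairwise orthogonal projections with $\sum_n f_n = 1$ and each $f_n \sim 1$. In the intended application $\cM$ carries a normal state and is therefore $\sigma$-finite, so this sequential construction suffices; in the general case one replaces the sequence by a maximal family $(f_i)_{i \in I}$ obtained from Zorn's lemma, indexed by finite subsets of $I$. Setting $q_N := f_1 + \dots + f_N$, the sequence $(q_N)$ is an increasing sequence of projections whose least upper bound is $1$, and hence converges to $1$ in the strong (and therefore weak-$*$) operator topology.

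It remains to check $1 - q_N \sim 1$ for each $N$. Since the tail $1 - q_N = \sum_{n > N} f_n$ and $1 = \sum_{n \geq 1} f_n$ are both countable sums of projections each equivalent to $1$, I would build a partial isometry $u \in \cM$ witnessing the equivalence as follows. For each $n \geq 1$, use $f_n \sim f_{n+N}$ to choose a partial isometry $u_n \in \cM$ with $u_n^* u_n = f_n$ and $u_n u_n^* = f_{n+N}$. Setting $u = \sum_n u_n$, the initial (resp.\ final) projections of the $u_n$ are pairwise orthogonal, which ensures SOT-convergence of the sum to a partial isometry satisfying $u^* u = \sum_n f_n = 1$ and $u u^* = \sum_{n>N} f_n = 1 - q_N$.

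The only step that I expect to require a little care is the SOT-convergence of $u$ and the verification of the support identities $u^*u = 1$, $uu^* = 1-q_N$, but these follow routinely from the orthogonality of the initial and final projections. The main conceptual input is the halving property of properly infinite algebras, which I would simply quote.
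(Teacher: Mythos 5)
Your proposal has a genuine gap at the step ``Iterating this halving, \dots I would construct a sequence $(f_n)_{n\geq 1}$ of pairwise orthogonal projections with $\sum_n f_n=1$.'' Blind iterated halving does not achieve this. Writing $e_N := 1 - (f_1+\dots+f_N)$, each halving splits $e_{N}$ into $e_{N+1} + f_{N+1}$ with both halves equivalent to $e_N$, but the halving theorem gives no control over how the mass of any given normal state $\psi$ distributes between the two pieces. One can easily arrange (already in $B(\cH)$, and similarly in a type III factor) that $\psi(e_N)$ stays bounded away from $0$, so that the decreasing limit $e_\infty = \bigwedge_N e_N$ is nonzero and $\sum_n f_n = 1 - e_\infty \neq 1$. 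In that case $q_N$ does not converge to $1$ in any relevant topology. The key idea you are missing, and which the paper's proof supplies, is to make the halving \emph{state-adapted}: at each step, among the two halves one keeps as the new complement $1-q_{k+1}$ the one with the \emph{smaller} $\psi$-measure, which forces $\psi(1-q_k)\leq 2^{-k}$ and hence $\psi(q_k)\to 1$.

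Two further remarks. First, your clause ``$\cM$ carries a normal state and is therefore $\sigma$-finite'' is not correct: the existence of a normal state does not imply $\sigma$-finiteness (the state may have a proper support projection). The paper circumvents this by building a genuine net indexed by finite families of normal states, producing $q_\alpha$ with $\psi(q_\alpha)\to 1$ for \emph{every} normal state $\psi$ simultaneously; this is what ``converges weak-$*$ to $1$'' means and is what the application (Lemma~\ref{lem:semifinite-to-finite}) requires, not just $\varphi(q_\alpha)\to 1$ for one fixed $\varphi$. Second, once the construction is fixed, your partial-isometry argument for $1 - q_N \sim 1$ is unnecessarily elaborate: by the inductive halving one has $1 - q_N = e_N \sim e_{N-1} \sim \cdots \sim 1$ directly (this is exactly how the paper records it), with no need to sum infinitely many partial isometries.
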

\begin{proof} Let $\psi \in \cM_*$ be a state. In the first step of the proof, we shall construct an increasing sequence $(q_k)_{k \in \N}$ of projections such that $\psi(q_k) \geq 1-2^{-k}$ and $(1-q_k) \sim 1$. The construction is by induction. Define $q_0=0$. If $q_k$ is defined, then $(1-q_k) \cM (1-q_k)$ is of type III, so by \cite[Proposition V.1.36]{Takesaki1}, there is projection $e_k \in (1-q_k) \cM (1-q_k)$ such that, if $f_k =1-q_k-e_k$, then $e_k \sim f_k \sim 1-q_k$. In particular, both $f_k$ and $e_k$ are equivalent to $1$ in $\cM$. Moreover, we have $\psi(e_k) + \psi(f_k) = \psi(1-q_k) \leq 2^{-k}$, so $\min(\psi(e_k), \psi(f_k)) \leq 2^{-k-1}$. It remains to define $q_{k+1} = 1-e_k$ if $\psi(e_k) \leq \psi(f_k)$ and $q_{k+1} = 1-f_k$ otherwise.

This sequence $q_k$ depends on $\psi$, so let us denote it $q_{k,\psi}$.

Consider the set $A$ of all finite sets of normal states on $\cM$, ordered by inclusion. For every $\alpha = \{\psi_1,\dots,\psi_d\} \in A$, we define $q_\alpha$ to be $q_{d,\psi}$ where  $\psi = \frac{1}{d}\sum_{i=1}^d\psi_i$. It satisfies $\psi_i(1-q_\alpha) \geq 1-d 2^{-d}$ for every $1 \leq i\leq d$. In other words, the net $(q_\alpha)$ satisfies $\lim_\alpha \psi(q_\alpha)=1$ for every normal state $\psi$. That is, it converges to $1$ weak-*.
\end{proof}

So let $\cM,\varphi,(a_i)$ be as in Theorem~\ref{thm:orthonormalization}, with $\cM$ type III, and let $q_\alpha$ be as in the previous lemma. Consider $v_\alpha = \sum_i e_{i,1} \otimes a_i^{1/2}q_\alpha$, that we see in $M_n(\cM)$. Observe that $v_\alpha^* v_\alpha = \sum_i e_{1,1}\otimes q_\alpha a_i q_\alpha = e_{1,1} \otimes q_\alpha$, so $v_\alpha$ is a partial isometry. It is well-known that the projections $e_{1,1} \otimes 1$ and $1_n \otimes 1$ are equivalent in $M_n(\cM)$. Indeed, it follows from a repeated use of \cite[Proposition V.1.36]{Takesaki1} that there are isometries $u_i\in \cM$ (that is $u_i^* u_i=1$) such that $1 = \sum_{i=1}^n u_i u_i^*$. Then $u:= \sum_i e_{1,i} \otimes u_i \in \cM$ realizes the equivalence between $u^* u = 1_n \otimes 1$ and $u u^*= e_{1,1} \otimes 1$. So by the properties of $q_\alpha$ given in Lemma~\ref{lem:typeIII}, we have
\[ e_{1,1} \otimes 1 - v_\alpha^* v_\alpha =e_{1,1} \otimes (1-q_\alpha)\sim e_{1,1} \otimes 1 \sim 1\textrm{ in }M_n(\cM).\]
In particular, we have
\[ 1_n \otimes 1 - v_\alpha v_\alpha^* \prec e_{1,1} \otimes 1 - v_\alpha^* v_\alpha,\]
and there is $w_\alpha \in M_n(\cM)$ such that $w_\alpha w_\alpha^* = 1_n \otimes 1 - v_\alpha v_\alpha^*$ and $w_\alpha^* w_\alpha \leq e_{1,1} \otimes 1 - v_\alpha^* v_\alpha$. Letting $u_\alpha = v_\alpha + w_\alpha$, we therefore have
\[ u_\alpha^* u_\alpha \leq e_{1,1}\otimes 1,\ \  u_\alpha u_\alpha^* = 1_n \otimes 1.\]
We can therefore define $p_{i,\alpha}\in \cM$ by $u_\alpha^* (e_{i,i}\otimes 1) u_\alpha=e_{1,1}\otimes p_{i,\alpha}$. The fact that $u_\alpha u_\alpha^*=1$ implies that $p_{i,\alpha}$ are pairwise orthogonal projections, but a priori we only have $\sum_i p_{i,\alpha} \leq 1$. However, the sum is close to $1$ as
\begin{equation}\label{eq:Pi_almost_sum_to_1} e_{1,1}\otimes (\sum_i p_{i,\alpha}) = u_\alpha^* u_\alpha \geq v_\alpha^* v_\alpha = e_{1,1} \otimes q_\alpha.\end{equation}
Moreover, by the definition of $p_{i,\alpha}$, we have
\begin{align*} e_{1,1} \otimes q_\alpha p_{i,\alpha} q_\alpha &= v_\alpha^* v_\alpha u_\alpha^* (e_{i,i} \otimes 1) u_\alpha v_\alpha^* v_\alpha\\
  &= v_\alpha^* (e_{i,i} \otimes 1) v_\alpha\\
  &= e_{1,1} \otimes q_\alpha a_i q_\alpha.\end{align*}
That is,
\begin{equation}\label{eq:qAq=qPq} q_\alpha p_{i,\alpha} q_\alpha = q_\alpha a_i q_\alpha.\end{equation}

As in the finite case, let us denote by $\|\cdot\|_\varphi$ the norm on $\cM^n$ given by \[ \| (b_i)_i \|_\varphi^2 = \sum_{i=1}^n \varphi(b_i^* b_i).\]
Remembering \eqref{eq:qAq=qPq}, we can decompose $a_i - p_{i,\alpha}  = a_i - q_\alpha a_i q_\alpha -(1-q_\alpha) p_{i,\alpha}q_\alpha - p_{i,\alpha}(1 - q_\alpha)$ and obtain by the triangle inequality
\[ \| (a_i - p_{i,\alpha})_i \|_\varphi \leq \|(a_i - q_\alpha a_i q_\alpha)_i\|_\varphi+\|( (1-q_\alpha) p_{i,\alpha}q_\alpha )_i\|_\varphi + \| ( p_{i,\alpha}(1 - q_\alpha) )_i\|_\varphi.\]
It follows from Lemma~\ref{lem:semifinite-to-finite} that the first term goes to $0$ as $\alpha \to \infty$. The last term is straightforward to bound:
\[ \| ( p_{i,\alpha}(1 - q_\alpha) )_i\|_\varphi^2 = \varphi( (1-q_\alpha) (\sum_i p_{i,\alpha}) (1-q_\alpha))\leq \varphi(1-q_\alpha) \to 0.\]
The middle term is bounded as follows
\begin{align*}
\| ((1-q_\alpha) p_{i,\alpha}q_\alpha)_i\|_\varphi^2 &= \sum_i \varphi(q_\alpha p_{i,\alpha}(1-q_\alpha)  p_{i,\alpha} q_\alpha)\\
& = \sum_i \varphi(q_\alpha a_i q_\alpha -(q_\alpha a_i q_\alpha)^2 ),
\end{align*}
which goes to $\sum_i \varphi(a_i - a_i^2)<\varepsilon$. All in all, this implies that 
\[ \limsup_\alpha \| (a_i - p_{i,\alpha})_i \|_\varphi <\sqrt{\varepsilon}.\]
We are not completely done yet, as $p_{i,\alpha}$ do not sum to $1$. But almost. Indeed, by \eqref{eq:Pi_almost_sum_to_1}, we have $\sum_i p_{i,\alpha}$ converges to $1$ and in particular 
\[ \lim_\alpha \varphi( \sum_i p_{i,\alpha}) = 1.\]
This implies that if we replace $p_{1,\alpha}$ by $p_{1,\alpha}+(1-\sum_i p_{i,\alpha})$, we obtain a PVM in $\cM$ which still satisfies
\[ \limsup_\alpha \| (a_i - p_{i,\alpha})_i \|_\varphi <\sqrt{\varepsilon}.\]
This concludes the proof of the Theorem in the type III case.

\section{Almost commuting PVMs are close to commuting PVMS}\label{sec:AC}

This short section is devoted to the proof of Theorem~\ref{thm:pvm_almost_commute}.

Denote by $(a_i)$ the POVM $a_i = \sum_j q_j p_i q_j$. We can compute 
\begin{align*}
\varepsilon & > \sum_{i,j} \|p_i q_j - q_j p_i\|_\varphi^2\\
& = \varphi(\sum_{i,j} p_i q_j p_i + q_j p_i q_j - (p_i q_j)^2 - (q_j p_i)^2)\\
& = 2 - 2\Re(\sum_i \varphi(a_i p_i))\\
& = \sum_i \|p_i - a_i\|_\varphi^2 + (1 - \varphi(\sum_i a_i^2)).
\end{align*}
We can apply Theorem~\ref{thm:orthonormalization} to the $a_i$ in the von Neumann algebra $\cN$ generated by the $a_i$'s, and obtain a PVM $p'_i$ belonging to $\cN$ such that
\[ \sum_i \|a_i - p'_i\|_\varphi^2 < 9 (\varepsilon - \sum_i \|p_i - a_i\|_\varphi^2).\]
But since $a_i$ belongs to the commutant of $\{q_j\}$, the same is true for $\cN$, so $[p'_i,q_j]=0$ for all $i,j$. Using the triangle inequality and the Cauchy-Schwarz inequality, we conclude as follows
\begin{align*}
\left(\sum_i \|p_i - p'_i\|_\varphi^2\right)^{\frac 1 2} & \leq \left(\sum_i \|p_i - a_i\|_\varphi^2\right)^{\frac 1 2} + \left(\sum_i \|a_i - p'_i\|_\varphi^2\right)^{\frac 1 2} \\
& \leq \sqrt{10} \left(\sum_i \|p_i - a_i\|_\varphi^2+ \frac 1 9 \|a_i - p'_i\|_\varphi^2\right)^{\frac 1 2}\\
& < \sqrt{10 \varepsilon}.
\end{align*}

\section{Hahn-Banach}\label{sec:HB}

We conclude this note with a quite unrelated subject, except that it is also an infinite dimensional generalization of a key result in \cite{quantumsoundness}, and that is is also used in \cite{quantumsoundness2}.

Lemma 9.2 in \cite{quantumsoundness} states that for any finite collection $a_1,\dots,a_n$ of positive matrices, 
\begin{equation}\label{eq:minmaxfd} \min\{ \tr(z) \mid z \geq a_i \forall i\} = \max\{ \sum_i \tr(a_i t_i) \mid 0 \leq t_i \leq 1, \sum_i t_i=1\},\end{equation}
and that moreover any pair of minimizer $z$ and maximizer $(t_1,\dots,t_n)$ satisfies 
\begin{equation}\label{eq:formula}z = \sum_i t_i a_i.\end{equation}

The equality in \eqref{eq:minmaxfd} is known to be true more generally in any semifinite von Neumann algebra, as a particular case of a duality for Pisier's operator-space valued non-commutative $L_p$ spaces $L_p(\cM;\ell_\infty)$ \cite{MR1648908}. Formula \eqref{eq:minmaxfd} corresponds to the case $p=1$ and $\cM=M_n(\C)$ in \cite[Proposition 2.1 (iii)]{MR2276775}. To the author's knowledge, \eqref{eq:formula} has not been observed or used earlier in operator space theory.

It turns out that the preceding is true more generally in arbitrary von Neumann algebras, as follows. For $t \in \cM$ and $\varphi \in \cM_*$, we use the standard notation $t\varphi \in \cM_*$ and $\varphi t \in \cM_*$ to denote the linear forms $x \mapsto \varphi(xt)$ and $x \mapsto \varphi(tx)$ respectively.
\begin{proposition}\label{prop:minmax} Let $\cM$ be a von Neumann algebra, and $\varphi_1, \dots , \varphi_n \in (\cM_*)_+$ be normal positive linear forms. Then 
\begin{equation}\label{eq:minmax}
\inf\{ \psi(1) \mid \psi \in \cM_*, \psi \geq \varphi_i \forall i\} = \sup\{ \sum_i \varphi_i(t_i) \mid (t_i) \textrm{POVM in }\cM\}.
\end{equation} 
Moreover, the infimum and the supremum are both attained, and any pair of a minimizer $\psi$ and a maximizer $(t_1,\dots,t_n)$ satisfies $t_i \psi = t_i \varphi_i$, $\psi t_i  = \varphi_it_i $and $\psi = \sum_i t_i \varphi_i= \sum_i \varphi_i t_i$.
\end{proposition}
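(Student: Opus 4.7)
The plan is to combine a weak-$*$ compactness argument for the dual (supremum) side with a single carefully chosen perturbation of the dual maximizer, which directly yields the primal minimizer. The easy direction $\inf \geq \sup$ follows from the computation $\psi(1) = \sum_i \psi(t_i) \geq \sum_i \varphi_i(t_i)$ for any feasible $\psi$ and any POVM $(t_i)$. For existence of the supremum, the set of POVMs is a weak-$*$ closed, hence weak-$*$ compact, convex subset of the unit ball of $\cM^n$; the functional $(t_i) \mapsto \sum_i \varphi_i(t_i)$ is weak-$*$ continuous because each $\varphi_i$ belongs to $\cM_*$, so the supremum is attained at some POVM $(t_1^*,\dots,t_n^*)$ with value $\alpha$.

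The heart of the proof is the following perturbation. For any POVM $(s_i)$, any $y \in \cM$ with $0 \leq y \leq 1$, and any $\epsilon \in (0,1)$, consider
\[
t_i' \;=\; (1-\epsilon y)^{1/2}\, t_i^*\, (1-\epsilon y)^{1/2} \;+\; \epsilon\, y^{1/2}\, s_i\, y^{1/2}.
\]
Each $t_i' \geq 0$, and $\sum_i t_i' = (1-\epsilon y)+\epsilon y = 1$, so $(t_i')$ is again a POVM. Maximality of $(t_i^*)$ gives $\sum_i \varphi_i(t_i') \leq \alpha$. Expanding $(1-\epsilon y)^{1/2} = 1 - \epsilon y/2 + O(\epsilon^2)$ in operator norm, dividing by $\epsilon$, and letting $\epsilon \to 0^+$ yields the key inequality
\[
\sum_i \varphi_i\bigl(y^{1/2}\, s_i\, y^{1/2}\bigr) \;\leq\; \tfrac{1}{2}\sum_i \varphi_i\bigl(y\, t_i^* + t_i^*\, y\bigr). \qquad (\star)
\]

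I would then define $\psi \in \cM_*$ by $\psi(y) := \tfrac{1}{2}\sum_i \varphi_i(y\, t_i^* + t_i^*\, y)$. This is a normal Hermitian form (a finite combination of normal forms, manifestly self-adjoint), and $\psi(1) = \sum_i \varphi_i(t_i^*) = \alpha$. Applying $(\star)$ with $s_j = 1$ and $s_i = 0$ for $i \ne j$ gives $\varphi_j(y) \leq \psi(y)$ for all $y \geq 0$, whence $\psi \geq \varphi_j$ for each $j$ and in particular $\psi \geq 0$. Thus $\psi$ is primal feasible with objective value $\alpha$, establishing strong duality and attainment of the infimum.

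For the ``moreover'' assertions, given any minimizer $\psi$ and any maximizer $(t_i)$, the identity $\sum_i (\psi - \varphi_i)(t_i) = \psi(1) - \alpha = 0$ together with the nonnegativity of each summand forces $(\psi - \varphi_i)(t_i) = 0$. Applying the Cauchy--Schwarz inequality $|\rho(t_i^{1/2} x)|^2 \leq \rho(t_i)\,\rho(x^* x)$ to the positive normal form $\rho = \psi - \varphi_i$ (and using $\rho(b t_i) = \overline{\rho(t_i b^*)}$ by Hermitianity) yields $\rho(t_i b) = \rho(b t_i) = 0$ for every $b \in \cM$, i.e., $\psi t_i = \varphi_i t_i$ and $t_i \psi = t_i \varphi_i$. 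Summing over $i$ and using $\sum_i t_i = 1$ gives $\psi = \sum_i t_i \varphi_i = \sum_i \varphi_i t_i$. The main obstacle I expect is discovering the right perturbation in $(\star)$: the asymmetric quadratic-form decomposition $(1-\epsilon y)^{1/2}(\cdot)(1-\epsilon y)^{1/2} + \epsilon\, y^{1/2}(\cdot)\, y^{1/2}$ is exactly what keeps the perturbed tuple $(t_i')$ an honest POVM while, through the choice $s_j = 1$, producing the dominance $\psi \geq \varphi_j$; a naive convex-combination perturbation $(1-\lambda) t_i^* + \lambda s_i$ only recovers weak duality.
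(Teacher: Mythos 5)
Your proof is correct, and it takes a genuinely different route from the paper's. The paper works from the primal (infimum) side: it invokes a Hahn--Banach separation lemma from Pisier's book to produce a POVM attaining the supremum, uses a Jordan-decomposition computation to verify the hypothesis of that lemma, and then establishes attainment of the infimum by appealing to weak-$*$ compactness of the ball of $\cM^*$ together with the fact that $\cM_*$ is $L$-embedded in $\cM^*$. You instead start on the dual side, obtaining a maximizing POVM $(t_i^*)$ directly by Banach--Alaoglu and weak-$*$ continuity, and then derive a first-order optimality condition via the clever Schur-type perturbation $t_i' = (1-\epsilon y)^{1/2} t_i^* (1-\epsilon y)^{1/2} + \epsilon\, y^{1/2} s_i y^{1/2}$; the resulting inequality $(\star)$ hands you the primal minimizer $\psi = \tfrac12\sum_i(\varphi_i t_i^* + t_i^* \varphi_i)$ explicitly, with $\psi \geq \varphi_j$ read off by specializing $s_j = 1$. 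This constructive existence proof for the infimum sidesteps both the abstract separation argument and the $L$-embedding theorem, and it has the pleasant feature of exhibiting the minimizer in closed form rather than deducing its existence nonconstructively. The complementary-slackness ("moreover") argument is essentially the same as the paper's: $\sum_i(\psi-\varphi_i)(t_i)=0$ with nonnegative summands, then Cauchy--Schwarz for the positive normal form $\psi - \varphi_i$ kills both $\rho(t_i x)$ and $\rho(x t_i)$, and summing over $i$ yields $\psi = \sum_i t_i\varphi_i = \sum_i \varphi_i t_i$. One small point worth making explicit in a final write-up: the operator-norm expansion $(1-\epsilon y)^{1/2} = 1 - \tfrac{\epsilon}{2}y + O(\epsilon^2)$ holds uniformly over $0 \le y \le 1$ by continuous functional calculus, which is what justifies the $O(\epsilon^2)$ estimate after applying the (bounded) functionals $\varphi_i$.
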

\begin{corollary}
For every $\varphi_1,\dots,\varphi_n \in (\cM_*)_+$, there is a unique element of $\cM_*$ of minimal norm such that $\psi \geq \varphi_i$ for all $i$.

Moreover, there is a POVM $t_1,\dots,t_n \in \cM$ such that $\psi= \sum_i t_i \varphi_i$.
\end{corollary}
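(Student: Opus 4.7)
The plan is to read everything off of Proposition~\ref{prop:minmax}. The one preliminary observation is that any admissible $\psi$, i.e.\ any $\psi \in \cM_*$ with $\psi \geq \varphi_i$ for all $i$, is automatically positive, since $\psi \geq \varphi_1 \geq 0$. For a positive normal form one has $\|\psi\| = \psi(1)$, so minimising the norm over the set $\{\psi \in \cM_* : \psi \geq \varphi_i \ \forall i\}$ is the same as minimising $\psi(1)$. Hence the infimum appearing in \eqref{eq:minmax} is the infimum of $\|\psi\|$ over the admissible $\psi$'s.

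Existence of a minimiser and the formula $\psi = \sum_i t_i \varphi_i$ then both come for free from the Proposition: it asserts that the infimum in \eqref{eq:minmax} is attained, and that for any minimiser $\psi$ and any maximiser $(t_1,\dots,t_n)$ (a POVM in $\cM$, which also exists by the Proposition) one has $\psi = \sum_i t_i \varphi_i$.

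For uniqueness, the same formula does the work. Fix once and for all a maximiser $(t_1,\dots,t_n)$ of the right-hand side of \eqref{eq:minmax}. If $\psi'$ and $\psi''$ are two elements of $\cM_*$ of minimal norm satisfying $\psi', \psi'' \geq \varphi_i$, then applying the ``moreover'' part of Proposition~\ref{prop:minmax} to each of the pairs $(\psi',(t_i))$ and $(\psi'',(t_i))$ gives $\psi' = \sum_i t_i \varphi_i = \psi''$. This proves uniqueness and simultaneously exhibits the minimiser in the stated form, completing the proof. (No additional step is expected to be difficult; the content is entirely packaged inside Proposition~\ref{prop:minmax}.)
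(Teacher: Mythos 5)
Your proposal is correct, and it is the natural argument the paper leaves implicit (the corollary is stated without proof). The key observations are all in place: admissible $\psi$'s are automatically positive because $\psi \geq \varphi_1 \geq 0$, so minimizing $\|\psi\|$ is the same as minimizing $\psi(1)$; Proposition~\ref{prop:minmax} then gives attainment and the formula $\psi = \sum_i t_i\varphi_i$ for a maximizer $(t_i)$; and uniqueness follows by fixing one maximizer and noting that every minimizer is forced to equal $\sum_i t_i\varphi_i$.
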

\begin{proof}[Proof of Proposition~\ref{prop:minmax}]
The inequality $\geq$ in \eqref{eq:minmax} is clear: if $t_1,\dots,t_n \in \cM$ is any POVM and $\psi \geq \varphi_i$ for all $i$, then
\[ \sum_i \varphi_i(t_i) \leq \sum_i \psi(t_i) = \psi(1).\]
The converse relies on Hahn-Banach. We rather use the variant given in \cite[Lemma A.16]{pisierTP}. Define 
\[ m := \inf\{ \psi(1) \mid \psi \in \cM_*, \psi \geq \varphi_i \forall i\}.\]
Consider the weak-* closed convex subset of $\cM^{n+1}$
\[ S= \{(t_0,\dots,t_n) \in \cM^{n+1} \mid 0 \leq t_i \leq 1 \forall i\},\]
and for every self-adjoint $\psi \in \cM_*$, define $f_\psi \in \ell_\infty(S)$ by
\[ f_\psi(t_0,\dots,t_n) = \psi(t_0) - m + \sum_{i=1}^n (\varphi_i-\psi)(t_i).\]
We claim that $\sup_S f_\psi \geq 0$. Indeed, it is a general fact that if a self-adjoint element $\rho \in \cM_*$ has Jordan decomposition $\rho = \rho_+-\rho_-$ (see \cite[Theorem III.4.2]{Takesaki1}), then  $\sup_{0\leq t \leq 1} \rho(t) = \|\rho_+\|=\rho_+(1)$. In our situation, we obtain
\[ \sup_{S} f_\psi = \left(\psi_+ + \sum_{i=1}^n  (\varphi_i-\psi)_+\right)(1) - m.\]
For every $1 \leq j \leq n$, using that $\psi_+ \geq \psi$, $(\varphi_j - \psi)_+ \geq \varphi_j - \psi$ and $(\varphi_i - \psi)_+\geq 0$ for $i \neq j$, we see that
\[ \psi_+ + \sum_{i=1}^n  (\varphi_i-\psi)_+ \geq \varphi_j.\]
By the definition of $m$, this implies $\sup_{S} f_\psi \geq 0$ as claimed.

Consider now the convex cone $\mathcal F\subset \ell_\infty(S)$ generated by the \emph{convex} set $\{f_\psi \mid \psi = \psi^* \in \cM_*\}$~:
\[ \mathcal{F} = \{\lambda f_\psi \mid \psi = \psi^* \in \cM_*, \lambda \in (0,\infty)\}.\]
The elements of $\mathcal{F}$ are affine weak-* continuous maps on $S$, and we have just proved that $\forall f \in \mathcal{F}$, $\sup_{S} f \geq 0$. We can therefore apply \cite[Lemma A.16]{pisierTP} and obtain  $(t_0,\dots,t_n) \in S$ such that $f_\psi(t_0,\dots,t_n) \geq 0$ for every self-adjoint $\psi \in \cM_*$. Equivalently,
\[ \psi(t_0 - \sum_1^n t_i) + \sum_1^n \varphi_i(t_i) \geq m.\]
This implies that $t_0=\sum_1^n t_i$, and that $\sum_1^n \varphi_i(t_i) \geq m$. In other words, we have obtained positive elements $(t_1,\dots,t_n)\in \cM$ such that $\sum_i t_i \leq 1$ and $\sum_i \varphi_i(t_i) \geq m$. A fortiori (say replacing $t_n$ by $t_n + (1-\sum_1^n t_i)$), there are positive $t_i$ with $\sum_i t_i = 1$ and $\sum_i \varphi_i(t_i) \geq m$. This proves at the same time the inequality $\leq$ in \eqref{eq:minmax} and that the supremum in \eqref{eq:minmax} is attained. 

Let us justify that the infimum is also attained. By the weak-* compactness of the unit ball of $\cM^*$, we have that the infimum of $\psi(1)$ over all $\psi \in \cM^*$ such that $\psi \geq \varphi_i$ for all $i$ is attained at some $\psi \in \cM^*$. But using that $\cM_*$ is $L$-embedded in $\cM^*$ \cite[Theorem III.2.14]{Takesaki1}, we obtain that $\psi$ necessarily belongs to $\cM_*$.

Consider now $\psi$ attaining the infimum, and $(t_1,\dots,t_n)$ attaining the supremum in \eqref{eq:minmax}. We then have
\[ \sum_i (\psi-\varphi_i)(t_i)=0.\]
This implies (since $(\psi-\varphi_i)(t_i) \geq 0$ is clear) that $(\psi-\varphi_i)(t_i) = 0$ for all $i$. By the Cauchy-Schwarz inequality, we obtain that for every $x \in \cM$,
\[ |(\psi - \varphi_i)(xt_i)|^2 = |(\psi - \varphi_i)(xt_i^{1/2} t_i^{1/2})|^2 \leq (\psi - \varphi_i)(t_i) (\psi - \varphi_i)(x t_i x^*) = 0.\]
Hence $t_i (\psi-\varphi_i)=0$. Summing over $i$ we also obtain $\sum_i t_i \varphi_i = \sum_i t_i \psi = \psi$. Taking the adjoints we deduce $\psi t_i = \varphi_i t_i$.
\end{proof}

\end{document}